\definecolor {refcol}{RGB}{40,0,255}
\newfont{\footsc}{cmcsc10 at 8truept}
\newfont{\footbf}{cmbx10 at 8truept}
\newfont{\footrm}{cmr10 at 10truept}
\newtheorem{theorem}{Theorem}
\newtheorem{lemma}[theorem]{Lemma}
\newtheorem{question}[theorem]{Question}
\newenvironment{proof}[1][Proof]{\noindent{\textbf {#1}  }}  {\hfill$\Box$\bigskip}
\begin{document}

\title{\textbf{The spectral radius of graphs with no }$K_{2,t}$ \textbf{minor }}
\author{V. Nikiforov\thanks{Department of Mathematical Sciences, University of
Memphis, Memphis TN 38152, USA; \textit{email: vnikifrv@memphis.edu}}}
\date{}
\maketitle

\begin{abstract}
Let $t\geq3$ and $G$ be a graph of order $n,$ with no $K_{2,t}$ minor. If
$n>400t^{6}$, then the spectral radius $\mu\left(  G\right)  $ satisfies%
\[
\mu\left(  G\right)  \leq\frac{t-1}{2}+\sqrt{n+\frac{t^{2}-2t-3}{4}},
\]
with equality if and only if $n\equiv1$ $(\operatorname{mod}$ $t)$ and
$G=K_{1}\vee\left\lfloor n/t\right\rfloor K_{t}$.

For $t=3$ the maximum $\mu\left(  G\right)  $ is found exactly for any
$n>40000.$\bigskip

\textbf{AMS classification: }\textit{15A42; 05C35.}

\textbf{Keywords:}\textit{ spectral radius; forbidden minor; spectral extremal
problem.}

\end{abstract}

\section{Introduction and main results}

A graph $H$ is called a \emph{minor} of a graph $G$ if $H$ can be obtained by
contracting edges of a subgraph of $G$. Write $H\nprec G$ if  $H$ is not a
minor of $G$. The \emph{spectral radius} $\mu\left(  G\right)  $ of a graph
$G$ is the largest eigenvalue of its adjacency matrix. In this note we study
the following question:\medskip

\begin{question}
\label{q1}How large can $\mu\left(  G\right)  $ be if $G$ a graph of order $n$
and $K_{2,t}\nprec G$? \medskip
\end{question}

Particular cases of this question have been studied before: for example, Yu,
Shu and Hong \cite{YSH12} showed that if $G$ is a graph of order $n$ and
$K_{2,3}\nprec G$, then
\begin{equation}
\mu\left(  G\right)  =3/2+\sqrt{n-7/4}.\label{x}%
\end{equation}
Unfortunately, bound (\ref{x}) it is not attained for any $G$, although it is
tight up to an additive term approaching $1/2$. Likewise, Benediktovich
\cite{Ben15} studied $2$-connected graphs with no $K_{2,4}$ minors and gave a
few bounds similar to (\ref{x}), but gave no summary result.

To outline the case $t=2$, let $n$ be odd and  $F_{2}\left(  n\right)  $ be
the \emph{friendship graph, }that is, a set of $\left\lfloor n/2\right\rfloor
$ triangles sharing a single common vertex. If $n$ is even, let $F_{2}\left(
n\right)  $ be obtained by hanging an extra edge to the common vertex of
$F_{2}\left(  n-1\right)  $.

In \cite{Nik07} and \cite{ZhWa12}, it was shown that if $G$ is a graph of
order $n$, with no $K_{2,2}$, then $\mu\left(  G\right)  <\mu\left(
F_{2}\left(  n\right)  \right)  $, unless $G=F_{2}\left(  n\right)  .$

Incidentally,  $K_{2,2}\nprec F_{2}\left(  n\right)  $; thus, Question
\ref{q1} is settled for $t=2.$ We shall show that the situation is similar for
any $t\geq3$ and $n$ large.

First, we extend the family $\left\{  F_{2}\left(  n\right)  \right\}  $ for
$t>2$. Given graphs $F$ and $H$, write $F\vee H$ for their \emph{join} and
$F+H$ for their \emph{disjoint union}. Suppose that $t\geq3$ and $n\geq t+1$;
set $p=\left\lfloor \left(  n-1\right)  /t\right\rfloor $ and let $n-1=pt+s$.

Now, let $F_{t}\left(  n\right)  :=K_{1}\vee\left(  pK_{t}+K_{s}\right)  $; in
particular, if $s=0,$ let $F_{t}\left(  n\right)  :=K_{1}\vee pK_{t}$.
Clearly, the graph $F_{t}\left(  n\right)  $ is of order $n$ and
$K_{2,t}\nprec F_{t}\left(  n\right)  $. 

It is not hard to find that $\mu\left(  F_{t}\left(  n\right)  \right)  $ is
the largest root of the cubic equation%
\[
\left(  x-s+1\right)  (x^{2}-\left(  t-1\right)  x-n+1)+s\left(  t-s\right)
=0,
\]
and satisfies the inequality
\[
\mu\left(  F_{t}\left(  n\right)  \right)  \leq\frac{t-1}{2}+\sqrt
{n+\frac{t^{2}-2t-3}{4}},
\]
with equality if and only if $n\equiv1$ $(\operatorname{mod}$ $t)$, i.e., if
$s=0$.

Our first result answers Question \ref{q1} for $t=3$ and $n$ large:

\begin{theorem}
\label{t1}If $G$ is a graph of order $n>40000$ and $K_{2,3}\nprec G$, then
$\mu\left(  G\right)  <\mu\left(  F_{3}\left(  n\right)  \right)  $, unless
$G=F_{3}\left(  n\right)  .$
\end{theorem}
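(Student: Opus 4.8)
The plan is to take an extremal graph $G$ --- one maximizing $\mu$ among all $K_{2,3}$-minor-free graphs of order $n$ --- fix a Perron eigenvector $\mathbf{x}$ normalized so that $\max_v x_v = x_u = 1$, and show that $u$ must be a dominating vertex with $G-u$ a disjoint union of paths and triangles; the theorem then follows from a comparison that singles out $F_3(n)$. The first ingredient is the elementary but crucial observation that, since a $K_{2,3}$ subgraph is already a $K_{2,3}$ minor, no two vertices of $G$ can have three common neighbours; in particular $|N(u)\cap N(w)|\le 2$ for every $w\ne u$, so the induced graph $G[N(u)]$ has maximum degree at most $2$ and is a union of paths and cycles. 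The second ingredient is the known linear bound $e(G)\le 2(n-1)$ for $K_{2,3}$-minor-free graphs, which is genuinely stronger than what the common-neighbour condition alone gives and is the place where minor-freeness, not merely subgraph-freeness, is used.

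Next I would extract the degree of $u$. Since $F_3(n)$ is a feasible competitor and contains the spanning star $K_{1,n-1}$, we have $\mu=\mu(G)\ge\mu(F_3(n))\ge\sqrt{n-1}$, so $\mu^2\ge n-1$. Reading off the row of $A^2$ at $u$ gives
\[
\mu^2 = d(u) + \sum_{w\ne u}|N(u)\cap N(w)|\,x_w \le d(u) + 2\sum_{w\ne u}x_w .
\]
A weight estimate controls the tail: from $\mu\sum_v x_v=\sum_v d(v)x_v\le 2e(G)\le 4(n-1)$ one gets $\sum_v x_v\le 4(n-1)/\mu<4\sqrt n$. Combining the two bounds yields $d(u)\ge \mu^2-8\sqrt n\ge n-1-8\sqrt n$, so $u$ misses at most $O(\sqrt n)$ vertices.

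The hard part is upgrading this to $d(u)=n-1$ exactly, and simultaneously ruling out superfluous vertices or components. Here I would argue by extremality: the $O(\sqrt n)$ vertices outside $N(u)\cup\{u\}$ carry little eigenvector weight, and relocating any such vertex so that it becomes a neighbour of $u$ --- inserting it into a triangle through $u$ --- keeps the graph $K_{2,3}$-minor-free while strictly increasing $\mu$, contradicting the choice of $G$. This local-switching step, together with the bookkeeping showing that no eigenvector weight or vertices are wasted off the dominating vertex, is where I expect the real work to lie. Once it is done, $G=K_1\vee H$ with $H$ a graph on $n-1$ vertices of maximum degree at most $2$. A final minor check pins down $H$: a cycle $C_k$ with $k\ge 4$ in $H$ would give $K_1\vee C_k\supseteq K_1\vee C_4\supseteq K_{2,3}$, so every cycle of $H$ is a triangle, i.e. $H$ is a disjoint union of paths and triangles.

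It remains to show that among all such $H$ the join $K_1\vee H$ has strictly largest spectral radius precisely when $H=pK_3+K_s$. I would use the apex eigen-equation: writing $\mathbf{x}$ with $x_u=1$, each component $C$ of $H$ contributes $f_C(\mu)=\mathbf 1^{\mathsf T}(\mu I-A_C)^{-1}\mathbf 1$, and $\mu$ is the largest root of $\mu=\sum_C f_C(\mu)$. Since each $f_C$ is decreasing in $\mu$ on $(\mu>2)$, making $\sum_C f_C$ pointwise larger forces a larger root, so it suffices to maximise $\sum_C f_C(\mu)$ over partitions of the $n-1$ vertices into paths and triangles. Clean rational-function inequalities valid for all $\mu>2$ --- for instance $f_{K_3}(\mu)-f_{P_3}(\mu)=\frac{2\mu+2}{(\mu-2)(\mu^2-2)}>0$, together with the analogous comparisons showing a triangle beats every allocation of three path-vertices --- drive an exchange argument: any non-triangle block of three or more vertices can be reorganised into triangles, leaving only the forced remainder $K_s$ with $s\in\{0,1,2\}$, and each such step strictly increases the total. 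This identifies $F_3(n)=K_1\vee(pK_3+K_s)$ as the unique maximiser, with the hypothesis $n>40000$ ensuring that the preceding degree estimates and switches are all valid.
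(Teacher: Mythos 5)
Your skeleton matches the paper's (extremal $G$, a dominating vertex carrying the maximal eigenvector entry, classification of $G-u$ into paths and triangles, then identification of $F_3(n)$), and your opening estimates are sound: $|N(u)\cap N(w)|\le 2$, the row of $A^2$ at $u$, the aggregate bound $\sum_v x_v\le 4(n-1)/\mu$, and hence $d(u)\ge \mu^2-8\sqrt n\ge n-1-8\sqrt n$ all check out. But the step you defer --- upgrading this to $d(u)=n-1$ --- is precisely the paper's Lemma \ref{le1}, i.e.\ the core of the whole proof, and your sketch of it is not merely incomplete: the move you describe fails. ``Inserting $v$ into a triangle through $u$'' creates a $K_{2,3}$ \emph{subgraph} whenever the chosen partner $w\in N(u)$ already has two neighbours $a,b$ in $G-u$ (take parts $\{u,w\}$ and $\{a,b,v\}$: all six edges are present). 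So the only safe relocation is a pendant edge at $u$, or a triangle through a partner of degree at most $1$ in $G-u$; and for such a relocation to strictly increase the Rayleigh quotient you must show $\sum_{i\in\Gamma(v)}x_i<x_u=1$, which does \emph{not} follow from $\sum_v x_v\le 4\sqrt n$ --- a priori $v$ could have many neighbours of non-negligible weight. The paper supplies exactly the two missing ingredients: a pointwise bound $x_w=O(1/\sqrt n)$ for every $w\ne u$ (from $d(w)+d(u)\le n+2$, valid since $u,w$ have at most two common neighbours, combined with the $A^2$ row estimate), and a degree bound on the relocated vertex, obtained by choosing $v$ of \emph{minimum} degree in the graph induced on $V\setminus(N(u)\cup\{u\})$, so that the Chudnovsky--Reed--Seymour density theorem gives $d_G(v)\le 5$. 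Both can be rebuilt from your ingredients, but as written the central step is asserted, not proved.

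Your endgame is a genuinely different route from the paper's, and worth a comment. The paper uses edge-maximality plus local rewirings and eigenvector symmetry to kill any path component of order at least $4$; you instead compare all admissible $H$ via the apex equation $\mu=\sum_C f_C(\mu)$ with $f_C(\mu)=\mathbf 1^{\mathsf T}(\mu I-A_C)^{-1}\mathbf 1$. That identity is correct, pointwise domination of $\sum_C f_C$ does push the largest root up, and your computation $f_{K_3}(\mu)-f_{P_3}(\mu)=(2\mu+2)/\bigl((\mu-2)(\mu^2-2)\bigr)$ is right. If completed, this is arguably cleaner than the paper's case analysis and even removes its reliance on the ``at most one non-triangle component'' step. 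But it is not yet complete either: the exchange argument needs the full family $f_{P_h}(\mu)<f_{K_3}(\mu)+f_{P_{h-3}}(\mu)$ for every $h\ge3$ and all $\mu>2$, plus the small-remainder comparisons (e.g.\ $2f_{P_2}<f_{K_3}+f_{P_1}$), of which you verify a single instance; note that the crude bound $f_{P_h}\le h/(\mu-2)$ points the \emph{wrong} way here, so these inequalities genuinely require the explicit (Chebyshev-type) formulas for $f_{P_h}$ or an induction on $h$. In sum: right architecture, a valid and interesting alternative finish, but two open gaps, of which the dominating-vertex lemma is the serious one.
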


A similar theorem may hold also for $t>3$, but our general result is somewhat weaker:

\begin{theorem}
\label{t2}Let $t\geq4$ and $n\geq400t^{6}$. If $G$ is a graph of order $n$ and
$K_{2,t}\nprec G$, then
\begin{equation}
\mu\left(  G\right)  \leq\frac{t-1}{2}+\sqrt{n+\frac{t^{2}-2t-3}{4}%
}.\label{xx}%
\end{equation}
Equality holds if and only if $n\equiv1$ $(\operatorname{mod}$ $t)$ and
$G=F_{t}\left(  n\right)  .$
\end{theorem}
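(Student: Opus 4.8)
The plan is to bound the spectral radius of a $K_{2,t}$-minor-free graph $G$ by combining a local structural consequence of the forbidden minor with the eigenvector equation. The central structural fact to establish first is that if $K_{2,t}\nprec G$, then for every vertex $v$, the number of edges inside its neighborhood $N(v)$ is small: specifically, the subgraph induced on $N(v)$ cannot contain two vertices joined to $t$ common vertices within $N(v)\cup\{v\}$ in a way that would build the minor, so one expects a bound of the form $e(N(v))\leq c\,t\,d(v)$ or, better, a near-linear bound on $e(G[N(v)])$ in terms of $|N(v)|$. More precisely, I would prove that $G[N(v)]$ has no $K_{1,t}$ (or has bounded "book" size), forcing $\sum_{u\in N(v)}d_{G[N(v)]}(u)$ to be $O(t\,d(v))$; this is the engine that controls the quadratic form $\langle A^2 x, x\rangle$.

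Next I would set up the eigenvalue machinery. Let $x$ be the Perron eigenvector normalized so that $\max_i x_i = x_u = 1$ at some vertex $u$, and write $\mu=\mu(G)$. The key identity is the walk-counting inequality $\mu^2 x_u = \sum_{v\sim u} \mu x_v = \sum_{v\sim u}\sum_{w\sim v} x_w$, which splits the two-step walks from $u$ into those returning to $N(u)\cup\{u\}$ and those leaving it. Using the structural bound on $e(G[N(u)])$ together with $\mu\le\sqrt{2e(G)}$ and the trivial vertex-degree constraints, I would derive $\mu^2 \leq (t-1)\mu + n + O(t^2)$, which after solving the quadratic yields exactly the claimed bound $\mu\le \frac{t-1}{2}+\sqrt{n+\frac{t^2-2t-3}{4}}$ up to the lower-order terms. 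The hypothesis $n\ge 400t^6$ is what lets the $O(t^2)$ error terms be absorbed without disturbing the leading constant $\frac{t-1}{2}$, so the crude counting must be done carefully enough to land exactly on $\frac{t^2-2t-3}{4}$ inside the radical.

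The main obstacle, and the part requiring the most delicate work, is the equality/stability analysis: showing that equality forces $n\equiv 1\pmod t$ and $G=F_t(n)$. Here I would argue that equality in the spectral bound propagates back through every inequality used in the walk-counting step, which pins down the degree of the dominant vertex $u$ to be $n-1$ (so $u$ is adjacent to all others) and forces $G-u$ to be a disjoint union of cliques each of order at most $t-1$ — precisely the structure $pK_t + K_s$ joined to $K_1$. To nail the extremal graph I expect to need a separate argument that $G$ must contain a near-universal vertex: one shows that if no vertex has degree close to $n$, the spectral radius falls strictly below the target, exploiting again the $K_{2,t}$-free condition to limit how "spread out" the edges can be. The final step is a monotonicity/optimization computation over the residue $s$ showing $\mu(F_t(n))$ is maximized (and meets the radical bound with equality) exactly when $s=0$.

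\begin{proof}[Proof sketch]
Omitted; see the plan above for the intended argument.
\end{proof}
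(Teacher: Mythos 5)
Your plan assembles some of the right ingredients (the codegree bound $\le t-1$ coming from forbidding $K_{2,t}$ as a subgraph, a dominant vertex, the final optimization over the residue $s$), but its quantitative core cannot produce the stated theorem. You propose to prove $\mu^2\le(t-1)\mu+n+O(t^2)$ by walk counting and then let $n\ge400t^6$ "absorb" the error. That is a misunderstanding of what the hypothesis on $n$ buys: inequality (\ref{xx}) is equivalent to the exact inequality $\mu(\mu-t+1)\le n-1$, and an additive error $+Ct^2$ inside the radical gives, for \emph{every} $n$, the strictly weaker bound $\frac{t-1}{2}+\sqrt{n+Ct^2+\cdots}$; large $n$ makes the discrepancy small but never zero, so neither the inequality nor its equality characterization can be reached this way. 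The paper never proves (\ref{xx}) for an arbitrary $K_{2,t}$-minor-free graph by direct counting. It first passes to an extremal graph $G\in\mathcal{M}_{t}(n)$ and proves (Lemma \ref{le1}) that the vertex $v_1$ carrying the maximum eigenvector entry has degree \emph{exactly} $n-1$; then every other vertex has degree at most $t$ (its neighbors other than $v_1$ are common neighbors with $v_1$), and the two eigenequations $\mu x_1\le(n-1)x_2$ and $\mu x_2\le x_1+(t-1)x_2$ combine to give $\mu(\mu-t+1)\le n-1$ with no error term at all.

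You do gesture at the needed ingredient — "a separate argument that $G$ must contain a near-universal vertex" — but you place it inside the equality analysis, whereas it must come first, and "degree close to $n$" would not suffice: the exact computation above needs degree exactly $n-1$. The mechanism that delivers this (Tait--Tobin style, as in Lemma \ref{le1}) is a perturbation argument that crucially uses extremality of $G$: one bounds all eigenvector entries by $O(t/\sqrt{n})$ via the codegree bound, uses the Chudnovsky--Reed--Seymour/Mader edge-density theorem for $K_{2,t}$-minor-free graphs to find a vertex $v$ of degree at most $2t-1$ outside $N(v_1)\cup\{v_1\}$, and rewires $v$ onto $v_1$; since this cannot increase the spectral radius of an extremal graph, one gets $x_1=O(t^2/\sqrt{n})$ and then $\mu^2=O(t^6)$, contradicting $n\ge400t^6$ — this is where the hypothesis on $n$ is actually spent, not on absorbing error terms. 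None of this appears in your sketch. Finally, two problems in your equality analysis: the components of $G-v_1$ are copies of $K_t$ (order $t$), not "cliques of order at most $t-1$"; and what equality actually gives is only that $G-v_1$ is $(t-1)$-regular — passing from regularity to "every component is $K_t$" is not automatic and requires the Ding--Johnson--Seymour bound $e(H)\le h+t(t-3)/2$ for connected graphs $H$ of order $h$ with no $K_{1,t}$ minor, which rules out non-clique $(t-1)$-regular components.
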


Before proving these theorems, let us note that if $t\geq4$ and $n\geq
400t^{6}$, then
\[
\mu\left(  F_{t}\left(  n\right)  \right)  >\frac{t-1}{2}+\sqrt{n+\frac
{t^{2}-2t-3}{4}}-\frac{t\left(  t+1\right)  }{8n},
\]
so bound (\ref{xx}) is quite tight.

The proofs of Theorems \ref{t1} and \ref{t2} are based on a structural lemma
inspired by \cite{TaTo16}. Write $\mathcal{M}_{t}\left(  n\right)  $ for the
set of graphs of order $n$, with no $K_{2,t}$ minors, and with maximum
spectral radius.

\begin{lemma}
\label{le1}Let $t\geq3$, $n>16\left(  t-1\right)  ^{4}\left(  5t-3\right)
^{2}$, and $G\in\mathcal{M}_{t}\left(  n\right)  $. If $\mathbf{x}$ is an
eigenvector to $\mu\left(  G\right)  $, then the maximum entry of $\mathbf{x}$
corresponds to a vertex of degree $n-1$.
\end{lemma}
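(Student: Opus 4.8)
The plan is to study a graph $G\in\mathcal{M}_t(n)$ together with a Perron eigenvector $\mathbf{x}\ge 0$ of $\mu:=\mu(G)$, normalized so that its largest entry is $x_u=1$ at the target vertex $u$; thus $x_w\le 1$ for every $w$. I would open with three facts. First, if two vertices shared $t$ common neighbours they would span a $K_{2,t}$ subgraph, hence a minor, so $|N(u)\cap N(w)|\le t-1$ for all $w\ne u$. Second, the eigenvalue equation at $u$ gives $\sum_{w\in N(u)}x_w=\mu x_u=\mu$. Third, since $F_t(n)$ is itself $K_{2,t}$-minor-free of order $n$, extremality yields $\mu\ge\mu(F_t(n))$; evaluating the cubic satisfied by $\mu(F_t(n))$ then shows $\mu^2-(t-1)\mu\ge n-1-\delta$ with an explicit $\delta\ge 0$ that vanishes when $s=0$ and stays far below $1/2$ for $n$ in our range. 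It is essential that the coefficient of $\mu$ here is exactly $t-1$.

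The engine is the second-neighbourhood identity, counting $2$-walks from $u$:
\[
\mu^2=\mu^2x_u=d(u)+\sum_{w\ne u}|N(u)\cap N(w)|\,x_w .
\]
I would split the sum over $w\in N(u)$ and over $R:=V\setminus N[u]$, the set of vertices non-adjacent to $u$. By $|N(u)\cap N(w)|\le t-1$ and $\sum_{w\in N(u)}x_w=\mu$, the neighbour part is at most $(t-1)\mu$; this term is matched exactly by the $-(t-1)\mu$ coming from the lower bound of the previous paragraph, which is the whole point of using the sharp eigenvalue equation at $u$ rather than a crude estimate.

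The crux is the $R$-part, which is at most $(t-1)\sigma$ with $\sigma:=\sum_{w\in R}x_w$, so I must control the eigenvector mass on the non-neighbours. Writing $m:=|R|=n-1-d(u)$ and using $x_z\le 1$, I would bound $\mu\sigma=\sum_{w\in R}\sum_{z\sim w}x_z\le 2e(R)+e(N(u),R)$. Each $w\in R$ has at most $t-1$ neighbours in $N(u)$, so $e(N(u),R)\le(t-1)m$; and since $G[R]$ is again $K_{2,t}$-minor-free, a linear bound on the edges of such graphs (of the shape $e\le(2t-1)(v-1)$) gives $e(R)\le(2t-1)(m-1)$. Hence $\sigma\le(5t-3)m/\mu$ and the $R$-part is at most $(t-1)(5t-3)m/\mu$. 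This is where minor-freeness is used beyond the common-neighbour count: the K\H{o}v\'{a}ri--S\'{o}s--Tur\'{a}n subgraph bound would only give $e(R)=O(t^{1/2}m^{3/2})$, far too weak, so the \emph{linear} extremal bound for the minor is indispensable. Establishing or citing that edge bound is the main quantitative input; moreover, summing over $R$ (rather than bounding $\max_{R}x_w$) is what sidesteps the difficulty that a priori $m$ could exceed $\mu$.

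Combining the three estimates with $d(u)=n-1-m$ gives $\mu^2-(t-1)\mu\le n-1-m+(t-1)(5t-3)m/\mu$, and comparing with $\mu^2-(t-1)\mu\ge n-1-\delta$ yields
\[
m\Bigl(1-\tfrac{(t-1)(5t-3)}{\mu}\Bigr)\le\delta .
\]
Since $\mu\ge\sqrt{n-2}$ and $n>16(t-1)^4(5t-3)^2$, one has $\mu\ge 4(t-1)^2(5t-3)-o(1)$, so the bracket exceeds $1/2$ while $\delta<1/2$; therefore $m<1$, forcing $m=0$ and $d(u)=n-1$. The same inequality disposes of a possibly disconnected $G$: vertices outside the component of $u$ lie in $R$ with $x=0$ and contribute nothing to $\sigma$, so they are ruled out too, which incidentally shows $G$ is connected. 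I expect the only genuine obstacle to be the third paragraph, namely pinning down the linear edge bound for $K_{2,t}$-minor-free graphs and thereby the precise constant $5t-3$ that calibrates the threshold on $n$.
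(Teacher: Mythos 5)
Your proof is correct, but it takes a genuinely different route from the paper's. The paper argues by contradiction: using the degree-sum bound $d(u)+d(v_1)\le n+t-1$ and the $A^2$ eigenequation with the crude estimate $\sum_i x_i\le\sqrt{n}$, it first shows every non-maximal entry is at most $2(t-1)/\sqrt{n}$; then, assuming $d(v_1)\le n-2$, it picks a vertex $v$ outside $N[v_1]$ of degree at most $2t-1$, deletes all its edges and joins it to $v_1$, and uses extremality under this rewiring to force $x_1\le 2(t-1)(2t-1)/\sqrt{n}$ as well; finally $\mu=2\sum_{\{i,j\}\in E(G)}x_ix_j$ becomes bounded by a function of $t$ alone, contradicting $\mu^2>n-1$. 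You instead prove the conclusion directly, playing the sharp lower bound $\mu^2-(t-1)\mu\ge n-1-\delta$ with $\delta<1/2$ (from comparison with $F_t(n)$ via its cubic) against the upper bound $\mu^2-(t-1)\mu\le n-1-m\left(1-(t-1)(5t-3)/\mu\right)$ obtained by splitting the $2$-walk count at $u$ over $N(u)$ and $R$ and controlling $\sum_{w\in R}x_w$ by edge-counting in $R$; this forces $m=0$. Both proofs rest on the same two combinatorial inputs --- the common-neighbour bound $t-1$ and the linear Chudnovsky--Reed--Seymour edge bound, whose Theorem 1.1 gives $|E|\le(t+1)(v-1)/2\le(2t-1)(v-1)$ and thus supplies the estimate you left as a citation --- but they use extremality of $G$ differently: the paper needs it for the rewiring step, while you invoke it only through $\mu\ge\mu(F_t(n))$. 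What your route buys: the lemma then holds for \emph{any} $K_{2,t}$-minor-free graph of order $n$ with $\mu\ge\mu(F_t(n))$, not merely spectral-extremal ones; connectivity falls out as a corollary instead of being argued separately from extremality; and there is no case analysis. What the paper's route buys: it never needs the exact cubic for $\mu(F_t(n))$ --- the trivial bound $\mu^2>n-1$ suffices --- whereas your argument genuinely requires the coefficient $t-1$ and $\delta<1/2$, as you rightly stress. Two cosmetic repairs: your lower bound should read $\mu\ge\sqrt{n-1}$ (from $K_{1,n-1}\subseteq F_t(n)$), not $\sqrt{n-2}$; and with the CRS constant your $5t-3$ improves to $2t$, which only relaxes the threshold on $n$.
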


\begin{proof}
Let $t,n$ and $G$ be as required. Hereafter, let $V:=$ $\left\{  v_{1}%
,\ldots,v_{n}\right\}  $ be the vertex set of $G$; let $\Gamma_{G}\left(
v\right)  $ be the set of the neighbors of $v\in V$, and set $d_{G}\left(
v\right)  :=\left\vert \Gamma_{G}\left(  v\right)  \right\vert $; the
subscript $G$ is omitted if $G$ is understood. Also, $G-v$ stands for the
graph obtained by omitting the vertex $v$.

Clearly $G$ is connected, as otherwise $G$ there is a graph $H$ with no
$K_{2,t}$ minor such that  $\mu\left(  H\right)  >\mu\left(  G\right)  $,
contradicting $G\in\mathcal{M}_{t}\left(  n\right)  $.

Set for short, $\mu:=\mu\left(  G\right)  $, and let $\mathbf{x}:=\left(
x_{1},\ldots,x_{n}\right)  $ be a unit eigenvector to $\mu$ such that
$x_{1}\geq\cdots\geq x_{n}$. We have to show that $d\left(  v_{1}\right)
=n-1$.

Let $A$ be the adjacency matrix of $G$ and set $B=\left[  b_{i,j}\right]
:=A^{2}.$ Note that $b_{i,j}$ is equal to the number of $2$-walks starting at
$v_{i}$ and ending at $v_{j}$; hence, if $i\neq j$, then $b_{i,j}\leq t-1$, as
$K_{2,t}\nsubseteq G$. Since $B\mathbf{x}=\mu^{2}\mathbf{x}$, for any vertex
$u$, we see that
\[
\mu^{2}x_{u}=d\left(  u\right)  x_{u}+\sum\limits_{i\in V\backslash\left\{
u\right\}  }b_{u,i}x_{i}\leq d\left(  u\right)  x_{u}+\left(  t-1\right)
\sum\limits_{i\in V\backslash\left\{  u\right\}  }x_{i}\leq d\left(  u\right)
x_{u}+\left(  t-1\right)  \left(  \sqrt{n}-x_{u}\right)  .
\]
The last inequality follows from $\left(  x_{1}+\cdots+x_{n}\right)  ^{2}\leq
n\left(  x_{1}^{2}+\cdots+x_{n}^{2}\right)  =n$. We find that
\begin{equation}
d\left(  u\right)  \geq\mu^{2}+t-1-\frac{\left(  t-1\right)  \sqrt{n}}{x_{u}}.
\label{bo}%
\end{equation}
On the other hand, if $u\neq v_{1}$, then \ $d\left(  u\right)  +d\left(
v_{1}\right)  \leq n+t-1$, as $K_{2,t}\nsubseteq G$. Using (\ref{bo}), we get%
\[
n+t-1\geq2\mu^{2}+2\left(  t-1\right)  -\frac{\left(  t-1\right)  \sqrt{n}%
}{x_{u}}-\frac{\left(  t-1\right)  \sqrt{n}}{x_{1}}\geq2\mu^{2}+2\left(
t-1\right)  -\frac{2\left(  t-1\right)  \sqrt{n}}{x_{u}}.
\]
In view of $\mu^{2}>n-1$, we obtain
\begin{equation}
x_{u}\leq\frac{2\left(  t-1\right)  \sqrt{n}}{2\mu^{2}-n+t-1}<\frac{2\left(
t-1\right)  \sqrt{n}}{n}=\frac{2\left(  t-1\right)  }{\sqrt{n}}. \label{uu}%
\end{equation}

Assume for a contradiction that $d\left(  v_{1}\right)  \leq n-2$; let $H$ be
the graph induced in $G$ by the set $V\backslash\left(  \Gamma\left(
v_{1}\right)  \cup\left\{  v_{1}\right\}  \right)  $ and suppose that $v$ is a
vertex with minimum degree in $H$. Since $K_{2,t}\nprec G$, Theorem 1.1 of
\cite{CRS11}\footnote{We use Theorem 1.1 of \cite{CRS11} solely to lower the
bound on $n$; otherwise just as good is an older result of Mader \cite{Mad72}
implying that $\left\vert E(G\right\vert \geq(4r+8)n$ forces a $K_{2,r}$
minor.} implies that $d_{H}\left(  v\right)  \leq t$, and since $v$ and
$v_{1}$ have at most $t-1$ common neighbors, we see that $d_{G}\left(
v\right)  \leq2t-1$.

Next, remove all edges incident to $v$ and join $v$ to $v_{1}$. Write
$G^{\prime}$ for the resulting graph, which is of order $n$ and $K_{2,t}\nprec
G^{\prime}$. As $G\in\mathcal{M}_{t}\left(  n\right)  $, we see that
\[
0\leq\mu-\mu\left(  G^{\prime}\right)  \leq2x_{u}\sum\limits_{i\in
\Gamma\left(  v\right)  }x_{i}-2x_{1}x_{u}.
\]
Thus, bound (\ref{uu}) implies an upper bound on $x_{1}$
\begin{equation}
x_{1}\leq\sum\limits_{i\in\Gamma\left(  v\right)  }x_{i}\leq d_{G}\left(
v\right)  \frac{2\left(  t-1\right)  }{\sqrt{n}}\leq\frac{2\left(  t-1\right)
\left(  2t-1\right)  }{\sqrt{n}}.\label{u1}%
\end{equation}
Finally, we apply (\ref{uu}) and (\ref{u1}) to show that $\mu$ is bounded in
$n$%
\begin{align*}
\mu &  =2\sum\limits_{\left\{  i,j\right\}  \in E\left(  G\right)  }x_{i}%
x_{j}\leq2x_{1}\sum\limits_{i\in\Gamma\left(  v_{1}\right)  }x_{i}%
+2\sum\limits_{\left\{  i,j\right\}  \in E\left(  G-v_{1}\right)  }x_{i}%
x_{j}\\
&  \leq\frac{8\left(  t-1\right)  ^{2}\left(  2t-1\right)  d\left(
v_{1}\right)  }{n}+\frac{8\left(  t-1\right)  ^{2}\left(  \left\vert E\left(
G\right)  \right\vert -d\left(  v_{1}\right)  \right)  }{n}\\
&  =\frac{16\left(  t-1\right)  ^{3}d\left(  v_{1}\right)  }{n}+\frac
{8\left\vert E\left(  G\right)  \right\vert \left(  t-1\right)  ^{2}}{n}.
\end{align*}
Since $d\left(  v_{1}\right)  <n,$ and Theorem 1.1 of \cite{CRS11} gives
$2\left\vert E\left(  G\right)  \right\vert \leq\left(  t+1\right)  \left(
n-1\right)  $, we find that
\[
n-1<\mu^{2}<16\left(  t-1\right)  ^{4}\left(  5t-3\right)  ^{2},
\]
contradicting the premises. Hence, $d\left(  v_{1}\right)  =n-1$.
\end{proof}

\begin{proof}
[\textbf{Proof of Theorem \ref{t2}}]Let $G\in\mathcal{M}_{t}\left(  n\right)
$, $\mu:=\mu\left(  G\right)  $, and $\mathbf{x}:=\left(  x_{1},\ldots
,x_{n}\right)  $ be a unit eigenvector to $\mu$ such that $x_{1}\geq\cdots\geq
x_{n}$. Lemma \ref{le1} implies that $d\left(  v_{1}\right)  =n-1$. Clearly
$\mu x_{1}\leq\left(  n-1\right)  x_{2}$ and since $d\left(  v_{2}\right)
\leq t$, we see that $\mu x_{2}\leq x_{1}+\left(  t-1\right)  x_{2}.$
Therefore,%
\[
\mu\left(  \mu-t+1\right)  \leq n-1,
\]
implying (\ref{xx}). If equality holds in (\ref{xx}), then $x_{2}=x_{3}%
=\cdots=x_{n}$ and $\mu x_{2}=x_{1}+d\left(  u\right)  x_{2}$ for
$u=2,\ldots,n$. Hence, $G-v_{1}$ is $\left(  t-1\right)  $-regular. To
complete the proof, we show that $G-v_{1}$ is a union of disjoint $K_{t}$s.

Assume for a contradiction that $G-v_{1}$ has a component $H$ that is
non-isomorphic to $K_{t}$, and let $h$ be the order of $H$. Clearly $h\geq
t+2$, for if $h=t+1$, any two nonadjacent vertices in $H$ have $t-1$ common
neighbors, which together with $v_{1}$ form a $K_{2,t}$.

Further, since $K_{2,t}\nprec G$, we see that $K_{1,t}\nprec H$. As shown in
\cite{DJS01}\footnote{See also Section 1.2 of \cite{CRS11} where the result is
stated more fittingly for our use.}, these conditions on $H$ imply that
$\left\vert E\left(  H\right)  \right\vert \leq h+t\left(  t-3\right)  /2$,
contradicting the identity $\left\vert E\left(  H\right)  \right\vert =\left(
t-1\right)  h/2$. Hence, $G-v_{1}$ is a union of disjoint $K_{t}$s, completing
the proof of Theorem \ref{t2}.
\end{proof}

\begin{proof}
[\textbf{Proof of Theorem \ref{t1}}]Let $G\in\mathcal{M}_{t}\left(  n\right)
$, $\mu:=\mu\left(  G\right)  $, and $\mathbf{x}:=\left(  x_{1},\ldots
,x_{n}\right)  $ be a unit eigenvector to $\mu$ such that $x_{1}\geq\cdots\geq
x_{n}$. Lemma \ref{le1} implies that $d\left(  v_{1}\right)  =n-1$. Since
$G-v_{1}$ has no vertex of degree more than $2$, its components are paths,
triangles, or isolated vertices, as otherwise $G$ contains a $K_{2,3}$ minor.

Since $G-v_{1}$ is edge maximal, it may have at most one component that is not
a triangle, say the component $H$. If $H$ is an isolated vertex or an edge, we
are done, so suppose that $H$ is a path of order $h$, and let $v_{k+1}%
,\ldots,v_{k+h}$ be the vertices along the path. Clearly $h\geq4$.

Suppose first that $h$ is odd, say $h=2s+1$ and $s\geq2.$ By symmetry,
$x_{k+i}=x_{k+h-i+1}$ for any $i\in\left[  s\right]  $. Remove the edges
$\left\{  v_{k+s-1},v_{k+s}\right\}  $, $\left\{  v_{k+s+2},v_{k+s+3}\right\}
$; add the edges $\left\{  v_{k+s},v_{k+s+2}\right\}  $, $\left\{
v_{k+s-1},v_{k+s+3}\right\}  $; and write $G^{\prime}$ for the resulting
graph. Clearly $K_{2,3}\nprec G^{\prime}$ has no $K_{2,3}$ minor, as $H$ is
replaced by a shorter path and a disjoint triangle. On the other hand,
\begin{align*}
\sum\limits_{\left\{  i,j\right\}  \in E\left(  G^{\prime}\right)  }x_{i}x_{j}
&  =\sum\limits_{\left\{  i,j\right\}  \in E\left(  G\right)  }x_{i}%
x_{j}-x_{k+s-1}x_{k+s}-x_{k+s+2}x_{k+s+3}+x_{k+s}x_{k+s+2}+x_{k+s-1}%
x_{k+s+3}\\
&  =\sum\limits_{\left\{  i,j\right\}  \in E\left(  G\right)  }x_{i}%
x_{j}+\left(  x_{k+s-1}-x_{k+s}\right)  ^{2}.
\end{align*}
Since $G\in\mathcal{M}_{t}\left(  n\right)  $, we get $\mu\left(  G^{\prime
}\right)  =\mu$; hence $\mathbf{x}$ is an eigenvector to $\mu\left(
G^{\prime}\right)  $. But $v_{k+s}$, $v_{k+s+1}$, and $v_{k+s+2}$ are
symmetric in $G^{\prime}$, implying that $x_{k+s}=x_{k+s+1}=x_{k+s+2}$. Now,
using the eigenequations of $G,$ we find that $x_{k+1}=\cdots=x_{k+h}$, which
is a contradiction, in view of
\[
\mu x_{k+1}=x_{k+2}+x_{1}\text{ \ \ \ and \ \ \ }\mu x_{k+2}=x_{k+3}%
+x_{k+1}+x_{1}\text{. }%
\]

Next, suppose that $h$ is even, say $h=2s$, and let $s\geq3$. By symmetry,
$x_{k+i}=x_{k+h-i+1}$ for any $i\in\left[  s\right]  $. Remove the edges
$\left\{  v_{k+s-1},v_{k+s}\right\}  $, $\left\{  v_{k+s+2},v_{k+s+3}\right\}
$; add the edges $\left\{  v_{k+s},v_{k+s+2}\right\}  $, $\left\{
v_{k+s-1},v_{k+s+3}\right\}  $; and write $G^{\prime}$ for the resulting
graph. Clearly $K_{2,3}\nprec G^{\prime}$, as $H$ is replaced by a shorter
path and a disjoint triangle. On the other hand,
\begin{align*}
\sum\limits_{\left\{  i,j\right\}  \in E\left(  G^{\prime}\right)  }x_{i}x_{j}
&  =\sum\limits_{\left\{  i,j\right\}  \in E\left(  G\right)  }x_{i}%
x_{j}-x_{k+s-1}x_{k+s}-x_{k+s+2}x_{k+s+3}+x_{k+s}x_{k+s+2}+x_{k+s-1}%
x_{k+s+3}\\
&  =\sum\limits_{\left\{  i,j\right\}  \in E\left(  G\right)  }x_{i}x_{j}.
\end{align*}
Since $G\in\mathcal{M}_{t}\left(  n\right)  $, we get $\mu\left(  G^{\prime
}\right)  =\mu$; hence $\mathbf{x}$ is an eigenvector to $\mu\left(
G^{\prime}\right)  $. But $v_{k+s}$, $v_{k+s+1}$, and $v_{k+s+2}$ are
symmetric in $G^{\prime}$, implying that $x_{k+s}=x_{k+s+1}=x_{k+s+2}.$ This
fact leads to a contradiction precisely as above.

It remains the case $h=4.$ By symmetry, $x_{k+1}=x_{k+4}$ and $x_{k+2}%
=x_{k+3}.$ Remove the edge $\left\{  v_{k+1},v_{k+2}\right\}  $, add the edge
$\left\{  v_{k+2},v_{k+4}\right\}  $, and write $G^{\prime}$ for the resulting
graph. Clearly $K_{2,t}\nprec G^{\prime}$ and
\[
\sum\limits_{\left\{  i,j\right\}  \in E\left(  G^{\prime}\right)  }x_{i}%
x_{j}=\sum\limits_{\left\{  i,j\right\}  \in E\left(  G\right)  }x_{i}%
x_{j}-x_{k+1}x_{k+2}+x_{k+2}x_{k+4}=\sum\limits_{\left\{  i,j\right\}  \in
E\left(  G\right)  }x_{i}x_{j}.
\]
Since $G\in\mathcal{M}_{t}\left(  n\right)  $, we get $\mu\left(  G^{\prime
}\right)  =\mu$; hence $\mathbf{x}$ is an eigenvector to $\mu\left(
G^{\prime}\right)  $, implying the contradicting eigenequations
\[
\mu\left(  G^{\prime}\right)  x_{k+1}=x_{1}\text{ \ \ and \ \ }\mu\left(
G^{\prime}\right)  x_{k+4}=x_{k+2}+x_{k+3}+x_{1}.
\]
The proof of Theorem \ref{t1} is completed.
\end{proof}

\bigskip


\begin{thebibliography}{9}                                                                                                %


\bibitem {Ben15}V.I. Benediktovich, Spectral radius of $K_{2,4}$-minor free
graph (in Russian), \emph{Dokl. Nats. Akad. Nauk Belarusi} \ \textbf{59}
(2015), 5--12.

\bibitem {CRS11}M. Chudnovsky, B. Reed, and P. Seymour, The edge-density for
$K_{2,t}$ minors, \emph{J. Combin. Theory Ser. B}, \textbf{101 }(2011) 18--46.

\bibitem {DJS01}G. Ding, T. Johnson, and P. Seymour, Spanning trees with many
leaves, \emph{J. Graph Theory} \textbf{37} (2001) 189--197.

\bibitem {Mad72}W. Mader, Existenz $n$-fach zusammenh\"{a}ngender Teilgraphen
in Graphen gen\"{u}gend gro\ss er Kantendichte,\emph{ Abh. Math. Sem. Univ.
Hamburg} \textbf{37} (1972) 86--97.

\bibitem {Nik07}V. Nikiforov, Bounds on graph eigenvalues II, \emph{Linear
Algebra Appl. }\textbf{427} (2007) 183--189.

\bibitem {TaTo16}M. Tait and J. Tobin, Three conjectures in extremal spectral
graph theory, \emph{preprint available at arXiv:1606.01916.}

\bibitem {YSH12}G. Yu, J. Shu, and Y. Hong, Bounds of spectral radius of
$K_{2,3}$-minor free graphs, \emph{Electronic J. Linear Algebra,} 23 (2012) 171--179.

\bibitem {ZhWa12}M. Zhai and B. Wang, Proof of a conjecture on the spectral
radius of $C_{4}$-free graphs, \emph{Linear Algebra Appl. }\textbf{437} (2012) 1641--1647.
\end{thebibliography}
\end{document}